\newtheorem{thm}{Theorem}
\newtheorem{lemma}{Lemma}
\newtheorem{cor}{Corollary}
\theoremstyle{remark}
\newtheorem{rem}{Remark}
\newcommand{\mr}{{\mathbb R}}
\newcommand{\mn}{{\mathbb N}}
\newcommand{\mc}{{\mathbb C}}
\newcommand{\md}{{\mathbb D}}
\newcommand{\mt}{{\mathbb T}}
\renewcommand{\rho}{\varrho}
\newcommand{\hil}{\mathcal{H}}
\newcommand{\num}{\mathop{\rm Num}\nolimits}
\newcommand{\dist}{\mathop{\rm dist}\nolimits}
\newcommand{\im}{\mathop{\rm Im}}
\begin{document}

\title[From spectral theory to zeros of holomorphic functions]{From spectral theory to bounds on zeros of holomorphic functions}

\author[M. Hansmann]{Marcel Hansmann}
\address{Faculty of Mathematics\\
Chemnitz University of Technology\\
Chemnitz\\
Germany.}
\email{marcel.hansmann@mathematik.tu-chemnitz.de}

\author[G. Katriel]{Guy Katriel}
\address{Department of Mathematics\\
ORT Braude College\\  
Karmiel\\
 Israel}
\email{katriel@braude.ac.il}
  
\keywords{Zeros of holomorphic functions, Cauchy transforms, Spectral theory, Eigenvalue estimates}

\begin{abstract}
We show how  eigenvalue estimates for linear operators can be used to obtain new Blaschke
type bounds on zeros of holomorphic functions on the unit disk.
\end{abstract}

\maketitle

\section{Introduction}

The aim of this work is to show that the spectral theory of linear operators can be used to derive results about the distribution of zeros
of a large class of holomorphic functions on the unit disk, namely, the class of all Cauchy transforms of complex Borel measures on the unit circle.
This class, henceforth denoted by  $\mathcal{K}$, consists of all holomorphic functions $h \in H(\md)$ which can be written as
\begin{equation}\label{cauchy}
 h(w)= \int_\mt \frac{\mu(d\zeta)}{1-w\overline{\zeta}},
\end{equation}
where $\mu$ is some finite complex Borel measure on the unit circle $\mt$.  Obviously $\mathcal{K}$  is a linear space, and a natural norm on it is given
by setting $\|h\|_{\mathcal{K}}$ to be the minimal total variation of all measures $\mu$ generating $h$ according to (\ref{cauchy}) (see Section \ref{sec:Cauchy}).

Denoting
the zero set of $h$ by $\mathcal{Z}(h)$, we will prove, assuming $h\in \mathcal{K}$ and $h(0)=1$, that
\begin{equation}\label{sc0}
\sum_{z \in \mathcal{Z}(h)} \left( |z|^{-1}-1 \right)\leq \left\| \frac{h(w)-1}{w}\right\|_{\mathcal{K}},
\end{equation}
where each zero in the sum is counted according to its order. Here it should be noted that when $h\in {\mathcal{K}}$, then
also $\frac{h(w)-h(0)}{w}\in {\mathcal{K}}$ and  that $\|\frac{h(w)-h(0)}{w} \|_{\mathcal{K}}\leq \|h \|_{\mathcal{K}}$. 

The finiteness of the sum in (\ref{sc0}) is certainly well-known, being a simple consequence of the fact that for $h \in \mathcal{K}$ with $h(0)=1$ the corresponding Blaschke-sum $\sum_{z \in \mathcal{Z}(h)} (1-|z|)$ is finite, see \cite{b_cima06}. However, the explicit bound in (\ref{sc0}) is new (to the best of
our knowledge). Moreover, we find it of interest that our result is proved
via purely operator-theoretic arguments, without appeal to the theory of complex functions. The proof proceeds by associating to a given holomorphic function $h$ an explicitly defined linear operator $L$ which is a rank-1 perturbation of a contraction $A$, and which has the property that the eigenvalues of $L$ outside the unit disk are the reciprocals of the zeros of $h$. A general result on eigenvalues of trace class perturbations
of linear operators, which is a special case of
\cite{Hansmann10} Theorem 2.1, is then used to obtain information on the eigenvalues of $L$, and hence on the zeros of $h$.

The question naturally arises whether our result can also be proved using more `standard' complex-function theory. While we do not know the
answer to this question, we do note here one {\it{consequence}} of inequality (\ref{sc0}) which can also be derived via a straightforward complex-analysis approach.
This concerns the subclass $H^1\subset \mathcal{K}$ of holomorphic functions with $L^1$ boundary values (see \cite{Rudin} Chapter 17).
Noting that for $h \in H^1$ we have $\|h\|_{\mathcal{K}} \leq \|h\|_{H^1}$ and so
$$\left\|\frac{h(w)-h(0)}{w}\right\|_{\mathcal{K}}\leq \left\|\frac{h(w)-h(0)}{w}\right\|_{H^1}=\|h(w)-h(0)\|_{H^1},$$
we see that inequality (\ref{sc0}) implies that for $h \in H^1$ with $h(0)=1$
 \begin{equation}\label{sc}
   \sum_{z \in \mathcal{Z}(h)} \left( |z|^{-1}-1 \right) \leq \|h-1\|_{H^1}.
 \end{equation}
We show now that (\ref{sc}) can also be obtained, independently of the method developed in this paper, via the classical Jensen formula.
We are grateful to Leonid Golinskii for pointing this out.
The Jensen formula says
that, for any holomorphic function on the unit disk with $h(0)=1$, we have
\begin{equation}\label{jensen}
\prod_{z\in \mathcal{Z}(h)}|z|^{-1}= \sup_{0<r<1} \exp\left( \int_{\mt} \log|h(r\zeta)| \: m(d\zeta)\right),
\end{equation}
where $m(d\zeta)$ denotes normalized Lebesgue measure on $\mt$. Assuming $h\in H^1$ and using convexity, the right hand side of (\ref{jensen}) is bounded from above by $\|h\|_{H^1}$,
while the left-hand side is bounded from below by
$$\prod_{z\in \mathcal{Z}(h)}|z|^{-1}=\prod_{z\in \mathcal{Z}(h)}\left[1+(|z|^{-1}-1)\right]\geq 1+\sum_{z\in \mathcal{Z}(h)}(|z|^{-1}-1),$$
so (\ref{jensen}) implies
$$\sum_{z\in \mathcal{Z}(h)}(|z|^{-1}-1) \leq \|h \|_{H^1}-1\leq \|h-1\|_{H^1},$$
so that we have (\ref{sc}).

It is certainly an interesting question whether our general inequality (\ref{sc0}) can also be derived using Jensen's formula. Whatever the answer turns out to
be, our approach here, which does not invoke any results from complex analysis, gives an alternative pathway to obtaining information on zeros of Cauchy transforms.

The approach described in this paper crucially depends on the fact (see Theorem \ref{thm:4} below) that the class of all $h \in \mathcal{K}$ with $h(0) \neq 0$  consists precisely of those functions which can be represented as
\[
h(w)/h(0)=1+ w\langle (I-wA)^{-1} \phi, \psi \rangle, \quad w \in \md,
\]
where $\phi$ and $\psi$ are arbitrary elements of a complex separable Hilbert space $(\hil, \langle . , . \rangle)$ and  $A$ is a contraction on $\hil$, i.e. a bounded linear operator with $\|A\| \leq 1$.  In the next section we shall prove the following inequality on zeros of such functions.

\begin{thm}\label{thm:1}
Let $(\hil, \langle . ,. \rangle)$ be a complex separable Hilbert space. Moreover, let $\phi, \psi \in \hil$ and let $A$ be a contraction on $\hil$. Define $h \in H(\md)$ as
\[
  h(w)= 1+ w \langle (I-wA)^{-1} \phi, \psi \rangle, \qquad w \in \md.
\]
Then
\[
  \sum_{z \in \mathcal{Z}(h)} \left( |z|^{-1}-1 \right) \leq \|\phi\|_\hil\|\psi\|_\hil,
\]
where each zero is counted according to its order.
\end{thm}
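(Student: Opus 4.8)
The plan is to realize $h$ as a perturbation determinant and then apply the abstract eigenvalue estimate of \cite{Hansmann10}. First I would introduce the rank-one operator $M\colon\hil\to\hil$, $Mx=\langle x,\psi\rangle\phi$, and set $L:=A-M$. Since $M$ has rank one it is trace class with $\|M\|_{\mathcal{S}_1}=\|\phi\|_\hil\|\psi\|_\hil$, so $L$ is a trace class (indeed rank-one) perturbation of the contraction $A$ with $\|L-A\|_{\mathcal{S}_1}=\|\phi\|_\hil\|\psi\|_\hil$. Because $\|A\|\le 1$ we have $\sigma(A)\subseteq\overline{\md}$, hence $\mc\setminus\overline{\md}\subseteq\rho(A)$; and since $L-A$ is compact, the part of the spectrum of $L$ lying in $\mc\setminus\overline{\md}$ consists of isolated eigenvalues of finite algebraic multiplicity (analytic Fredholm theory applied to the analytic family $w\mapsto I-wM(I-wA)^{-1}$ on $\md$, or invariance of the essential spectrum under compact perturbations).

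The heart of the argument is to identify $h$ with the perturbation determinant of $L$ relative to $A$. For $w\in\md$ a direct computation gives
\[
 (I-wL)(I-wA)^{-1}=I+wM(I-wA)^{-1},
\]
and $wM(I-wA)^{-1}$ is again rank one; using that $\det(I+K)=1+\langle u,v\rangle$ whenever $K$ is the rank-one operator $Kx=\langle x,v\rangle u$, together with the identity $\langle(I-wA)^{-1}\phi,\psi\rangle=\langle\phi,(I-\overline{w}A^{*})^{-1}\psi\rangle$, one obtains
\[
 d_L(1/w):=\det\!\big((I-wL)(I-wA)^{-1}\big)=1+w\langle(I-wA)^{-1}\phi,\psi\rangle=h(w),\qquad w\in\md .
\]
Thus $\lambda\mapsto d_L(\lambda)$ is the perturbation determinant of $L$ relative to $A$, holomorphic on $\rho(A)$, with $d_L(\lambda)=h(1/\lambda)$ for $|\lambda|>1$, and $d_L\not\equiv 0$ since $h(0)=1$. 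By the standard properties of perturbation determinants, the zeros of $d_L$ in $\rho(A)$, counted with multiplicity, are exactly the eigenvalues of $L$ in the discrete spectrum $\sigma_d(L)$, counted with algebraic multiplicity. Since $h(0)=1$, every zero $z$ of $h$ lies in $\md\setminus\{0\}$; the map $z\mapsto 1/z$ is biholomorphic near each such $z$, preserves the order of vanishing, and sends $z$ to a point $\lambda=1/z$ with $|\lambda|>1$ (so $\lambda\in\rho(A)$ automatically). Hence
\[
 \sum_{z\in\mathcal{Z}(h)}\big(|z|^{-1}-1\big)=\sum_{\lambda\in\sigma_d(L),\,|\lambda|>1}\big(|\lambda|-1\big),
\]
each zero, resp.\ eigenvalue, counted with its order, resp.\ algebraic multiplicity.

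It remains to bound the right-hand side. Applying \cite{Hansmann10}, Theorem 2.1, in the special case where the unperturbed operator is the contraction $A$ and the perturbation is the trace class operator $L-A=-M$, one obtains
\[
 \sum_{\lambda\in\sigma_d(L)}\dist\big(\lambda,\sigma(A)\big)\le\|L-A\|_{\mathcal{S}_1}=\|\phi\|_\hil\|\psi\|_\hil .
\]
Since $\sigma(A)\subseteq\overline{\md}$ we have $\dist(\lambda,\sigma(A))\ge\dist(\lambda,\overline{\md})=|\lambda|-1$ for every $\lambda$ with $|\lambda|>1$; discarding the remaining nonnegative terms and combining with the previous display yields $\sum_{z\in\mathcal{Z}(h)}(|z|^{-1}-1)\le\|\phi\|_\hil\|\psi\|_\hil$, which is the assertion.

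The step I expect to be the main obstacle is the identification of $h$ with the perturbation determinant together with the accompanying multiplicity bookkeeping: one must be careful about the adjoint appearing when the rank-one operator is commuted past the resolvent, and — more importantly — one must genuinely invoke the theory of perturbation determinants to guarantee that the \emph{order} of a zero of $h$ equals the \emph{algebraic} multiplicity of the corresponding eigenvalue of $L$, rather than merely a set-theoretic correspondence between zeros and eigenvalues. Everything else is either routine operator theory or a direct appeal to \cite{Hansmann10}.
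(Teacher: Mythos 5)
Your proposal follows essentially the same route as the paper: realize $h$ as the perturbation determinant of the rank-one perturbation $L=A-M$ of the contraction $A$, use the determinant to match zeros of $h$ (with their orders) to eigenvalues of $L$ outside $\overline{\md}$ (with their algebraic multiplicities), and then invoke the trace-class eigenvalue estimate of \cite{Hansmann10}. The one point you must correct is the form in which you quote that estimate: you state it as $\sum_{\lambda\in\sigma_d(L)}\dist(\lambda,\sigma(A))\le\|L-A\|_{tr}$, whereas the theorem (and the special case the paper records as its Theorem \ref{thm:3}) bounds $\sum_{\lambda\in\sigma_d(L)}\dist(\lambda,\num(A))$, the distance to the \emph{numerical range}, not to the spectrum. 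The version with $\sigma(A)$ is false for general non-normal $A$: take $A$ an $n\times n$ nilpotent Jordan block, so $\sigma(A)=\{0\}$, and add a rank-one perturbation of trace norm $\eps$ in the corner; this produces $n$ eigenvalues of modulus roughly $\eps^{1/n}$, and $n\eps^{1/n}\gg\eps$ for small $\eps$. Your argument survives unchanged, because for a contraction one has $\num(A)\subseteq\overline{\md}$ just as $\sigma(A)$ does, hence $\dist(\lambda,\num(A))\ge|\lambda|-1$ for $|\lambda|>1$, which is all you actually use. Apart from this, your determinant computation (including the adjoint bookkeeping) and the multiplicity argument via standard properties of perturbation determinants are exactly what the paper does in Lemma \ref{lemma1}, except that the paper also supplies a direct elementary eigenvector computation for the set-theoretic correspondence before appealing to the determinant for the multiplicity statement.
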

Our main theorem on the zeros of Cauchy transforms, which will follow from Theorem \ref{thm:1} in view of the equivalence noted above, is proved in Section \ref{sec:Cauchy}.

\section{The proof of Theorem \ref{thm:1}}\label{sec:abstract}

In the following $A$ and $L$ denote bounded linear operators on a complex separable Hilbert space $(\hil,\langle .,. \rangle)$
(the inner product being linear in the first component). The numerical range of $A$ is defined as
\[ \num(A) = \{ \langle Af,f\rangle: f \in \hil, \|f\|_\hil=1\}\]
and we clearly have $\num(A) \subset \{ \lambda: |\lambda| \leq \|A\|\}$. The discrete spectrum of $L$, $\sigma_d(L)$, is the set of all isolated eigenvalues of $L$ of finite algebraic multiplicity. Assuming that $L-A$ is a trace class operator we denote its trace norm by $\|L-A\|_{tr}.$

The next result, which is a special case of \cite{Hansmann10} Theorem 2.1, is the main ingredient in the proof of Theorem \ref{thm:1}. For the convenience of the reader, its proof is included in the appendix.
\begin{thm}\label{thm:3}
  Let $A$ and $L$ be as above and assume that $L-A$ is a trace class operator. Then
\[
    \sum_{\lambda \in \sigma_d(L)} \dist(\lambda, \num(A)) \leq \|L-A\|_{tr},
\]
where each eigenvalue is counted according to its algebraic multiplicity.
\end{thm}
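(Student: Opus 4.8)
The plan is to translate the eigenvalue problem for $L$ into a question about the zeros of a single holomorphic function — the perturbation determinant — and then to control those zeros by a Jensen-type inequality adapted to the geometry of $\num(A)$. Since $L-A$ is trace class and $\mc\setminus\overline{\num(A)}\subset\rho(A)$, I would, for $\lambda\notin\overline{\num(A)}$, form
\[
  d(\lambda)=\det\!\big(I+(L-A)(A-\lambda)^{-1}\big),
\]
which is well defined because $(L-A)(A-\lambda)^{-1}$ is trace class. Writing $I+(L-A)(A-\lambda)^{-1}=(L-\lambda)(A-\lambda)^{-1}$ shows that $d$ is holomorphic on $\Omega:=\mc\setminus\overline{\num(A)}$, that $d(\lambda)\to 1$ as $\lambda\to\infty$, and — by the standard theory of perturbation determinants together with analytic Fredholm theory — that the zeros of $d$ in $\Omega$ are exactly the eigenvalues of $L$ lying in $\Omega$, the order of each zero equalling the algebraic multiplicity. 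Because every $\lambda\in\sigma_d(L)\cap\overline{\num(A)}$ contributes $0$ to the sum in the theorem, it suffices to bound $\sum_{d(\lambda)=0}\dist(\lambda,\num(A))$ over the zeros in $\Omega$.

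The second ingredient is a growth bound for $d$. From the elementary determinant inequality $|\det(I+T)|\le\exp\|T\|_{tr}$ I get $\log|d(\lambda)|\le\|(L-A)(A-\lambda)^{-1}\|_{tr}\le\|L-A\|_{tr}\,\|(A-\lambda)^{-1}\|$. The numerical range now enters through the resolvent estimate: for every unit vector $f$ one has $|\langle(A-\lambda)f,f\rangle|\ge\dist(\lambda,\num(A))$, hence $\|(A-\lambda)f\|\ge\dist(\lambda,\num(A))$, so that $\|(A-\lambda)^{-1}\|\le\dist(\lambda,\num(A))^{-1}$ for $\lambda\in\Omega$. Combining,
\[
  \log|d(\lambda)|\le\frac{\|L-A\|_{tr}}{\dist(\lambda,\num(A))},\qquad\lambda\in\Omega.
\]

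It remains to convert this growth estimate into the asserted bound on the zeros, and this complex-analytic step is where the work lies. By the Toeplitz--Hausdorff theorem $\overline{\num(A)}$ is compact and convex, so (unless it degenerates to a point) its complement $\Omega$ is, as a subset of the Riemann sphere, simply connected; let $\varphi\colon\md\to\Omega$ be the conformal map with $\varphi(0)=\infty$. Then $g:=d\circ\varphi$ is holomorphic on $\md$ with $g(0)=1$, and its zeros $w_j$ correspond to the eigenvalues $\lambda_j=\varphi(w_j)$. One then applies a Blaschke/Jensen-type inequality on $\md$ to bound a weighted zero sum by the boundary growth of $\log|g|$. I expect the main obstacle precisely here: the transported majorant $\log|g(w)|\le\|L-A\|_{tr}/\dist(\varphi(w),\num(A))$ is singular on $\partial\md$ (which maps onto $\partial\,\overline{\num(A)}$), so a naive application of Jensen's formula produces a divergent boundary integral. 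Overcoming this requires a refined inequality of Borichev--Golinskii--Kupin type that pairs the boundary singularity of the majorant with the Blaschke weights, together with a conformal distortion estimate showing that, because $\num(A)$ is convex, the weighted disk sum $\sum_j(1-|w_j|)\cdots$ reproduces exactly $\sum_j\dist(\lambda_j,\num(A))$ with the sharp constant $1$. Finally I would record that in the degenerate case $\overline{\num(A)}=\{c\}$, i.e.\ $A=cI$, the statement collapses to the classical Lidskii inequality $\sum_j|\lambda_j(L-A)|\le\|L-A\|_{tr}$, which serves both as a base case and as a consistency check on the constant.
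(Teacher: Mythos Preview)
Your approach and the paper's are entirely different. The paper's proof is a four-line operator-theoretic argument: by Schur's lemma there is an orthonormal sequence $\{g_n\}$ with $\langle Lg_n,g_n\rangle=\lambda_n$; then the variational formula $\|L-A\|_{tr}=\sup_{\{e_n\},\{f_n\}}\sum_n|\langle(L-A)e_n,f_n\rangle|$ immediately gives
\[
  \|L-A\|_{tr}\ge\sum_n|\langle(L-A)g_n,g_n\rangle|=\sum_n|\lambda_n-\langle Ag_n,g_n\rangle|\ge\sum_n\dist(\lambda_n,\num(A)).
\]
No complex analysis, no conformal maps, no determinants.

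Your route via the perturbation determinant is, by contrast, the ``backward'' direction relative to the paper's whole philosophy (which is precisely to export operator-theoretic bounds \emph{to} complex analysis), and more importantly it has a genuine gap at the step you yourself flag. The majorant $\log|d(\lambda)|\le\|L-A\|_{tr}/\dist(\lambda,\num(A))$ is correct, but converting it into $\sum_j\dist(\lambda_j,\num(A))\le\|L-A\|_{tr}$ with constant exactly $1$ is not something the Borichev--Golinskii--Kupin machinery delivers: those results typically yield $\sum_j\dist(\lambda_j,\partial\Omega)^{p}\le C_p\,\|L-A\|_{tr}$ only for $p>1$, with constants $C_p\to\infty$ as $p\downarrow1$. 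The hoped-for ``conformal distortion estimate'' that would recover the endpoint case with sharp constant, uniformly over all convex $\num(A)$, is not a standard fact and you have not supplied one. So the argument as written is a sketch of a strategy whose decisive step is missing, whereas the paper's Schur-decomposition proof obtains the sharp inequality in a few lines.
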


We now assume that $\|A\| \leq 1$ and that $M$ is the rank-1 operator on $\hil$ defined by
\[ Mf = - \langle f, \psi \rangle \phi, \quad f \in \hil, \]
where $\phi,\psi \in \hil \setminus \{0\}$. Moreover, we set
$$L=A+M.$$
\begin{rem}
We note that the spectrum of the contraction $A$ is contained in $\overline{\md}$. Moreover, the assumption that $M=L-A$ is a rank-1 operator implies that the essential spectra of $A$ and $L$ coincide (Weyl's theorem) so the essential spectrum of $L$ must be contained in $\overline{\md}$ as well. This shows that the spectrum of $L$ in the complement of $\overline{\md}$ can consist of isolated eigenvalues of finite algebraic multiplicity only, see e.g. \cite{b_Davies} Theorem 4.3.18.
\end{rem}
Using $\|M\|_{tr}=\|M\|=\|\phi\|_\hil \| \psi\|_\hil$ and $\num(A) \subset \overline{\md}$ we obtain from Theorem \ref{thm:3} that
\begin{equation}\label{eq:3}
   \sum_{\lambda \in \sigma_d(L), |\lambda| > 1} (|\lambda|-1) \leq \sum_{\lambda \in \sigma_d(L)} \dist(\lambda, \num(A)) \leq \|\phi\|_\hil \|\psi\|_\hil.
\end{equation}

The following lemma relates the eigenvalues of $L=A+M$ to the zeros of the function
\[
  h(w)=1+w\langle (I-wA)^{-1} \phi , \psi \rangle, \qquad w \in \md.
\]
\begin{lemma}\label{lemma1}
For $w\in \md$ we have $h(w)=0$ if and only if $w^{-1}\in \sigma_d(L)$, and the order of $w$ as a zero of $h$ is equal to the
algebraic multiplicity of $w^{-1}$ as an eigenvalue of $L$.
\end{lemma}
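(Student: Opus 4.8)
The plan is to pass through the scalar function
\[
  d(\lambda) := 1 + \langle (\lambda I - A)^{-1}\phi, \psi\rangle, \qquad |\lambda| > 1,
\]
which is holomorphic on $\{|\lambda|>1\}$ since $\lambda I - A$ is invertible there (as $\|A\|\le 1$). Using $(I-\lambda^{-1}A)^{-1}=\lambda(\lambda I - A)^{-1}$ one checks immediately that $d(\lambda) = h(1/\lambda)$. The map $w\mapsto 1/w$ is a biholomorphism of $\md\setminus\{0\}$ onto $\{|\lambda|>1\}$ with nonvanishing derivative, so composing with it preserves orders of zeros; hence $w_0\in\md\setminus\{0\}$ is a zero of $h$ of order $m$ if and only if $\lambda_0:=1/w_0$ is a zero of $d$ of order $m$. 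Since $h(0)=1\neq 0$, nothing need be checked at $w=0$, and it remains to prove, for $|\lambda_0|>1$: (i) $d(\lambda_0)=0$ if and only if $\lambda_0\in\sigma_d(L)$; and (ii) the order of the zero of $d$ at $\lambda_0$ equals the algebraic multiplicity of $\lambda_0$ as an eigenvalue of $L$.

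For (i), factor $\lambda I - L = (\lambda I - A)\bigl(I-(\lambda I - A)^{-1}M\bigr)$, valid for $|\lambda|>1$. Here $(\lambda I - A)^{-1}M$ is the rank-one operator $f\mapsto -\langle f,\psi\rangle(\lambda I - A)^{-1}\phi$. For any rank-one operator $K\colon f\mapsto\langle f,u\rangle v$ one has $K^2=\langle v,u\rangle K$, so $I-K$ is invertible when $\langle v,u\rangle\neq 1$, with $(I-K)^{-1}=I+(1-\langle v,u\rangle)^{-1}K$, and $I-K$ is non-injective (with $v$ in its kernel) when $\langle v,u\rangle=1$. In the case at hand $1-\langle v,u\rangle = 1+\langle(\lambda I - A)^{-1}\phi,\psi\rangle = d(\lambda)$, so $\lambda I - L$ is invertible precisely when $d(\lambda)\neq 0$; that is, $\lambda\in\sigma(L)\iff d(\lambda)=0$. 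By the Remark, every point of $\sigma(L)$ of modulus $>1$ already lies in $\sigma_d(L)$, and trivially $\sigma_d(L)\subseteq\sigma(L)$, which gives (i).

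For (ii) I would exploit the resolvent formula coming from the same factorization: for $|\lambda|>1$,
\[
  (\lambda I - L)^{-1} - (\lambda I - A)^{-1} = \frac{1}{d(\lambda)}\,R(\lambda), \qquad R(\lambda)f := -\langle(\lambda I - A)^{-1}f,\psi\rangle\,(\lambda I - A)^{-1}\phi,
\]
where $R(\lambda)$ is rank one and holomorphic in $\lambda$. A short computation gives $\operatorname{tr}R(\lambda) = -\langle(\lambda I - A)^{-2}\phi,\psi\rangle = d'(\lambda)$, hence $\operatorname{tr}\bigl((\lambda I - L)^{-1}-(\lambda I - A)^{-1}\bigr) = d'(\lambda)/d(\lambda)$. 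Now integrate over a small positively oriented circle $\Gamma$ around $\lambda_0$ enclosing no other point of $\sigma_d(L)$ and no point of $\sigma(A)$. Since $(\lambda I - A)^{-1}$ is holomorphic inside $\Gamma$, we have $\frac{1}{2\pi i}\oint_\Gamma(\lambda I - L)^{-1}\,d\lambda = P$, the Riesz projection onto the generalized eigenspace of $L$ at $\lambda_0$, and subtracting the holomorphic term does not change the integral, so $\frac{1}{2\pi i}\oint_\Gamma\bigl((\lambda I - L)^{-1}-(\lambda I - A)^{-1}\bigr)\,d\lambda = P$ as well. The integrand here is a rank-one-valued continuous function, so $\operatorname{tr}$ commutes with the integral, giving $\operatorname{tr}P$ on the left; on the right the argument principle gives the order of the zero of $d$ at $\lambda_0$. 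Since $\operatorname{tr}P = \dim\operatorname{ran}P$ is the algebraic multiplicity of $\lambda_0$, this proves (ii).

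The computational steps — the rank-one algebra in (i), and the resolvent identity and trace computation in (ii) — are routine. The one place that genuinely needs care is the last paragraph's combination of facts from spectral theory: that $\frac{1}{2\pi i}\oint_\Gamma(\lambda I - L)^{-1}\,d\lambda$ is the Riesz projection, that its trace equals $\dim\operatorname{ran}P$, i.e.\ the algebraic multiplicity, and that the trace may be interchanged with the contour integral. All of this is classical (and in our situation especially transparent, since $(\lambda I - L)^{-1}-(\lambda I - A)^{-1}$ is literally a rank-one operator with meromorphic scalar coefficients), so I would quote it rather than reprove it.
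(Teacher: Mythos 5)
Your proof is correct, but it takes a genuinely more self-contained route than the paper, and the difference is worth spelling out. For the equivalence $h(w)=0 \Leftrightarrow w^{-1}\in\sigma_d(L)$ the paper argues directly with eigenvectors: if $h(w)=0$ then $(\lambda-A)^{-1}\phi$ is exhibited explicitly as an eigenvector of $L$ for $\lambda=w^{-1}$, and conversely any eigenvector is shown to force $h(w)=0$. Your rank-one invertibility criterion for $I-(\lambda-A)^{-1}M$ reaches the same conclusion and in fact gives slightly more, namely that $\lambda\in\sigma(L)$ (not merely that $\lambda$ is an eigenvalue) exactly when $d(\lambda)=0$; both arguments then need the paper's Remark to upgrade membership in the spectrum to membership in $\sigma_d(L)$. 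The real divergence is in the multiplicity statement: the paper disposes of it by observing that $h(1/\lambda)$ equals the perturbation determinant $\det(I-M(\lambda-A)^{-1})$ and citing Gohberg--Krein for the fact that the order of a zero of the perturbation determinant is the algebraic multiplicity of the corresponding eigenvalue. You instead prove this from scratch in the rank-one setting via the resolvent identity, the trace computation $\operatorname{tr}\bigl((\lambda-L)^{-1}-(\lambda-A)^{-1}\bigr)=d'(\lambda)/d(\lambda)$, the Riesz projection, and the argument principle --- which is precisely the standard proof of the cited determinant property, specialized to rank one where everything can be computed by hand. Your route buys self-containedness; the paper's citation buys brevity and a statement that generalizes immediately to trace-class perturbations. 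One small point you should make explicit: $\lambda_0$ must be an isolated point of $\sigma(L)$ for the contour $\Gamma$ to separate it from the rest of the spectrum; this follows from the paper's Remark, or directly from your part (i) since the zeros of $d$ are isolated in $\{|\lambda|>1\}$ (as $d$ is holomorphic there and tends to $1$ at infinity).
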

\begin{proof}
For $|w|<1$, set $\lambda=w^{-1}$, so that $|\lambda| > 1$. If $h(w)=1+\langle (\lambda-A)^{-1}\phi, \psi \rangle=0$ then
\begin{eqnarray*}
 L (\lambda-A)^{-1} \phi &=& (A+M)(\lambda-A)^{-1} \phi = A(\lambda-A)^{-1}\phi - \langle (\lambda-A)^{-1}\phi, \psi \rangle \phi \\
&=& \lambda(\lambda-A)^{-1}\phi - (1+\langle (\lambda-A)^{-1}\phi, \psi \rangle) \phi = \lambda (\lambda-A)^{-1} \phi,
\end{eqnarray*}
so $\lambda \in \sigma_d(L)$. Conversely, if $Lf=\lambda f$ for some $f \neq 0$, then $Mf = (\lambda-A)f$ and setting $g= (\lambda-A)f$ we obtain
\begin{equation}
  \label{eq:NEW}
 g= M (\lambda-A)^{-1} g = - \langle (\lambda-A)^{-1}g, \psi \rangle \phi,
\end{equation}
which implies that $g = c \phi$ for some $c \in \mc \setminus \{0\}$. But plugging $g=c \phi$ back into (\ref{eq:NEW}) we obtain that
$$h(w)= 1+ \langle (\lambda-A)^{-1} \phi, \psi  \rangle =0.$$
The fact that the algebraic multiplicity of $\lambda=w^{-1}$ as an eigenvalue of $L$ coincides with the order of $w$ as a zero of $h$ requires some more work. It follows from the fact that $h$ coincides with the perturbation determinant of $L$ by $A$, i.e.
$$h(1/\lambda)=\det(I-M(\lambda-A)^{-1}),$$ and from standard properties of the perturbation determinant, see \cite{b_Gohberg69} p.173-174.
\end{proof}

From (\ref{eq:3}) and Lemma \ref{lemma1} we obtain
\[ \sum_{z \in \mathcal{Z}(h)} \left(|z|^{-1}-1 \right) \leq \|\phi\|_\hil \|\psi\|_\hil.\]
This concludes the proof of Theorem \ref{thm:1}.



\section{Cauchy transforms}\label{sec:Cauchy}

Let $\mathcal{M}$ denote the class of all finite complex Borel measures on $\mt$ and for $\mu \in \mathcal{M}$ let $K\mu \in H(\md)$ be defined by
\[
(K\mu)(w) = \int_{\mt} \frac{\mu(d\zeta)}{1-w\overline{\zeta}}, \qquad w \in \md.
\]
$K$ is called the \textit{Cauchy transform} of the measure $\mu$.
\begin{rem}
  For a comprehensive discussion of Cauchy transforms we refer to the monograph \cite{b_cima06}, in which much of what follows is discussed in great detail.
\end{rem}

As above, the vector space of all Cauchy transforms is denoted by $\mathcal{K}$, i.e.
\[
   \mathcal{K}= \left\{ h \in H(\md) : h=K\mu \mbox{ for some } \mu \in \mathcal{M} \right\}.
\]
Setting $R_h=\{ \mu \in \mathcal{M} : K\mu=h\}$ the space $\mathcal{K}$ can be equipped with a norm by defining
\[\| h \|_{\mathcal{K}} = \inf \{ \|\mu \| : \mu \in R_h\}, \quad h \in \mathcal{K},\]
where $\|\mu\|$ denotes the total variation of the measure $\mu$. Moreover, for each $h \in \mathcal{K}$ there exists a unique $\mu_0 \in R_h$ such that
\[ \|h\|_{\mathcal{K}}=\|\mu_0\|,\]
see \cite{b_cima06} Proposition 4.1.4.
For our main result we will also need the \textit{backward shift operator}, $B$, on $H(\md)$,  which is defined as
\[
   (Bh)(w)= \frac{h(w)-h(0)}{w} ,\qquad h \in H(\md), \quad w \in \md.
\]

\begin{lemma}\label{lem:1}
Let $h \in H(\md)$. Then $h \in \mathcal{K}$ if and only if $Bh \in \mathcal{K}$.
\end{lemma}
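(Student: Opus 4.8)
The plan is to show both implications by an explicit manipulation of the generating measures. For the forward direction, suppose $h = K\mu$ for some $\mu \in \mathcal{M}$. Then $h(0) = \int_\mt \mu(d\zeta) = \mu(\mt)$, and I want to produce a measure $\nu \in \mathcal{M}$ with $K\nu = Bh$. The natural guess comes from the algebraic identity
\[
\frac{1}{w}\left( \frac{1}{1-w\overline{\zeta}} - 1 \right) = \frac{\overline{\zeta}}{1-w\overline{\zeta}},
\]
valid for $w \in \md$ and $\zeta \in \mt$. Integrating against $\mu$ and interchanging the integral with the (entire, in $w$ near $0$, hence legitimate) operations gives
\[
(Bh)(w) = \int_\mt \frac{\overline{\zeta}}{1-w\overline{\zeta}}\, \mu(d\zeta) = \int_\mt \frac{1}{1-w\overline{\zeta}}\, \nu(d\zeta),
\]
where $\nu$ is the measure with $\nu(d\zeta) = \overline{\zeta}\,\mu(d\zeta)$. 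Since $|\overline{\zeta}| = 1$ on $\mt$, $\nu$ is again a finite complex Borel measure (in fact $\|\nu\| = \|\mu\|$), so $Bh = K\nu \in \mathcal{K}$. This already yields the bound $\|Bh\|_{\mathcal{K}} \le \|h\|_{\mathcal{K}}$ mentioned in the introduction.

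For the converse, suppose $Bh = K\nu$ for some $\nu \in \mathcal{M}$. From the definition of $B$ we have $h(w) = h(0) + w\,(Bh)(w)$, so it suffices to express $w\,(Bh)(w)$ as a Cauchy transform plus a constant, since constants are themselves Cauchy transforms (of multiples of any fixed point mass, or more symmetrically $1 = K(m)$ where $m$ is normalized Lebesgue measure, as $\int_\mt (1-w\overline\zeta)^{-1} m(d\zeta) = 1$). Using the reciprocal of the identity above, $\frac{w\overline{\zeta}}{1-w\overline{\zeta}} = \frac{1}{1-w\overline{\zeta}} - 1$, so $w \cdot \frac{1}{1-w\overline{\zeta}} = \overline{\zeta}\left( \frac{1}{1-w\overline{\zeta}} - 1\right) \cdot \zeta = \ldots$; more directly, $w\,(Bh)(w) = \int_\mt \frac{w}{1-w\overline\zeta}\,\nu(d\zeta) = \int_\mt \zeta\left(\frac{1}{1-w\overline\zeta} - 1\right)\nu(d\zeta)$, which equals $K\sigma - c$ with $\sigma(d\zeta) = \zeta\,\nu(d\zeta)$ and $c = \int_\mt \zeta\,\nu(d\zeta)$. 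Hence $h = K\sigma + (h(0) - c)\cdot 1 \in \mathcal{K}$.

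The only genuine point requiring care is the interchange of integration and the division by $w$ (equivalently, differentiation/limit as $w \to 0$) in the computations above; this is routine since for fixed $w \in \md$ the integrands are bounded uniformly in $\zeta \in \mt$ by $(1-|w|)^{-1}$ or similar, and $\mu$, $\nu$ are finite, so dominated convergence applies and all the displayed identities hold pointwise in $w$. Everything else is the elementary algebra of the kernel $\frac{1}{1-w\overline\zeta}$ recorded above, so I do not expect any real obstacle.
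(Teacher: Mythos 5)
Your proof is correct and takes essentially the same route as the paper's: the forward direction produces the same measure $\overline{\zeta}\,\mu(d\zeta)$, and your converse arrives at exactly the paper's measure $\zeta\,\sigma(d\zeta)+(h(0)-c_\sigma)\,m(d\zeta)$, merely derived rather than guessed and verified. The one slip is the parenthetical claim that constants are Cauchy transforms of multiples of point masses --- $K(\delta_{\zeta_0})(w)=(1-w\overline{\zeta_0})^{-1}$ is not constant --- but since the argument actually uses $K(m)\equiv 1$, which is correct, nothing is affected.
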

The proof of Lemma \ref{lem:1} can be found in the appendix. For the proof of the next lemma see \cite{b_cima06} Proposition 11.3.1.
\begin{lemma}\label{lem:2}
The backward shift operator $B: (\mathcal{K}, \|.\|_{\mathcal{K}}) \to (\mathcal{K}, \|.\|_{\mathcal{K}})$ is bounded and
\[\|B\| = \sup_{h \in \mathcal{K}, h \neq 0} \frac{\|Bh\|_{\mathcal{K}}}{\|h\|_{\mathcal{K}}} = 1.\]
\end{lemma}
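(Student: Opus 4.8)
The plan is to determine the action of $B$ on a general Cauchy transform by an explicit computation and then read off both the mapping property and the norm. Given $h \in \mathcal{K}$ and any representing measure $\mu \in R_h$, I would first record that $h(0) = \int_\mt \mu(d\zeta)$, and then, for $w \in \md$ with $w \neq 0$, compute
\[
(Bh)(w) = \frac{1}{w}\int_\mt\left(\frac{1}{1-w\overline\zeta}-1\right)\mu(d\zeta) = \frac{1}{w}\int_\mt \frac{w\overline\zeta}{1-w\overline\zeta}\,\mu(d\zeta) = \int_\mt \frac{\overline\zeta\,\mu(d\zeta)}{1-w\overline\zeta}.
\]
Recognizing the right-hand side as $(K\nu)(w)$ for the complex Borel measure $\nu$ defined by $\nu(d\zeta) = \overline\zeta\,\mu(d\zeta)$, and noting that $Bh$ and $K\nu$ are holomorphic on all of $\md$ and agree off the origin, I would conclude $Bh = K\nu \in \mathcal{K}$. (This incidentally reproves the forward direction of Lemma \ref{lem:1}.)

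Next I would exploit that multiplication by the unimodular function $\zeta \mapsto \overline\zeta$ leaves total variation invariant: since $d|\nu| = |\overline\zeta|\,d|\mu| = d|\mu|$ on $\mt$, one has $\|\nu\| = \|\mu\|$. Hence $\|Bh\|_{\mathcal{K}} \leq \|\nu\| = \|\mu\|$, and taking the infimum over $\mu \in R_h$ yields $\|Bh\|_{\mathcal{K}} \leq \|h\|_{\mathcal{K}}$. This establishes that $B$ maps $\mathcal{K}$ into itself and that $\|B\| \leq 1$.

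For the matching lower bound $\|B\| \geq 1$ I would simply exhibit one function on which $B$ acts isometrically — in fact a fixed point. The natural candidate is $h(w) = (1-w)^{-1} = (K\delta_1)(w)$, the Cauchy transform of the unit point mass $\delta_1$ at $\zeta = 1 \in \mt$. Comparing constant Taylor coefficients shows every $\mu \in R_h$ satisfies $\mu(\mt) = h(0) = 1$, so $\|\mu\| \geq |\mu(\mt)| = 1$ and therefore $\|h\|_{\mathcal{K}} = \|\delta_1\| = 1$. On the other hand the elementary identity $(Bh)(w) = \big((1-w)^{-1}-1\big)/w = (1-w)^{-1} = h(w)$ gives $\|Bh\|_{\mathcal{K}} = \|h\|_{\mathcal{K}} = 1$, whence $\|B\| \geq 1$. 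Combining the two bounds gives $\|B\| = 1$.

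The individual steps here are routine; the only points demanding a little care are the justification that $Bh$ genuinely equals the Cauchy transform of $d\nu = \overline\zeta\,d\mu$ — one must deal with the spurious pole at $w = 0$, which is handled by the holomorphy/identity-theorem remark above — and the observation that the constant coefficient of a Cauchy transform equals the total mass of its measure, which is precisely what pins down $\|(1-w)^{-1}\|_{\mathcal{K}}$ exactly and upgrades the estimate from $\|B\| \leq 1$ to $\|B\| = 1$.
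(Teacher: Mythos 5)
Your proof is correct. The paper itself does not prove Lemma \ref{lem:2} --- it simply cites \cite{b_cima06}, Proposition 11.3.1 --- so your argument supplies a self-contained proof where the paper defers to the literature. The first half of your argument (the identity $B(K\mu)=K(\overline{\zeta}\,\mu(d\zeta))$, including the removable singularity at $w=0$) coincides with the computation the paper carries out in its appendix to prove the forward direction of Lemma \ref{lem:1}; the new content you add is the two norm estimates. The upper bound is clean: since $\zeta\mapsto\overline{\zeta}$ is unimodular on $\mt$, the measure $\nu$ you produce satisfies $\|\nu\|=\|\mu\|$, and taking the infimum over $\mu\in R_h$ gives $\|Bh\|_{\mathcal{K}}\leq\|h\|_{\mathcal{K}}$, i.e.\ $\|B\|\leq 1$. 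For the lower bound, your choice $h=K\delta_1=(1-w)^{-1}$ is a fixed point of $B$, and your normalization argument ($\mu(\mt)=(K\mu)(0)$ forces $\|\mu\|\geq|\mu(\mt)|=1$ for every representing measure) correctly pins down $\|h\|_{\mathcal{K}}=1$ exactly, which is the step one cannot skip: without it one only gets $\|Bh\|_{\mathcal{K}}/\|h\|_{\mathcal{K}}\geq 1/\|h\|_{\mathcal{K}}$ with an unknown denominator. (Note that the paper's own Remark after Corollary \ref{cor:2} shows the supremum is not attained at every $h$, e.g.\ $h\equiv 1$, so exhibiting a specific extremal function as you do is genuinely necessary.) In short: the approach is more elementary and more informative than the citation in the paper, and every step checks out.
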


Our main result on the zeros of Cauchy transforms is the following theorem.
\begin{thm}\label{thm:2}
   Let $h \in \mathcal{K}$ with $h(0)=1$. Then
   \begin{equation}\label{eq:1}
     \sum_{z \in \mathcal{Z}(h)} \left( |z|^{-1}-1\right) \leq \|Bh\|_{\mathcal{K}},
   \end{equation}
 where each zero is counted according to its order.
\end{thm}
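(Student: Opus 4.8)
The plan is to reduce Theorem \ref{thm:2} to Theorem \ref{thm:1}, using the representation of Cauchy transforms announced as Theorem \ref{thm:4} in the introduction. First I would apply the backward shift: since $h \in \mathcal{K}$, Lemma \ref{lem:1} guarantees $Bh \in \mathcal{K}$, so there is a measure $\nu \in \mathcal{M}$ with $K\nu = Bh$ and, by Proposition 4.1.4 of \cite{b_cima06}, we may choose $\nu$ so that $\|\nu\| = \|Bh\|_{\mathcal{K}}$. Writing out the defining identity $Bh = K\nu$ gives, for $w \in \md$,
\[
  \frac{h(w)-1}{w} = \int_{\mt} \frac{\nu(d\zeta)}{1-w\overline{\zeta}},
\]
and hence $h(w) = 1 + w \int_{\mt} \frac{\nu(d\zeta)}{1-w\overline{\zeta}}$.

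The next step is to realize the integral operator $w \mapsto \int_{\mt} (1-w\overline{\zeta})^{-1} \nu(d\zeta)$ in the resolvent form appearing in Theorem \ref{thm:1}. Take $\hil = L^2(\mt, |\nu|)$, where $|\nu|$ is the total variation measure, and let $A$ be multiplication by the coordinate function $\zeta$ on $\hil$; then $A$ is a contraction (indeed a unitary, since $|\zeta|=1$) and $(I-wA)^{-1}$ is multiplication by $(1-w\zeta)^{-1}$. Let $d\nu = \eta \, d|\nu|$ be the polar decomposition with $|\eta| = 1$ a.e., and set $\phi = \eta$ and $\psi = \mathbf{1}$... — more carefully, one needs $\overline{\zeta}$ rather than $\zeta$ in the kernel, so I would instead let $A$ be multiplication by $\overline{\zeta}$ (still unitary, still a contraction), so that $(I-wA)^{-1}$ is multiplication by $(1-w\overline{\zeta})^{-1}$, and choose $\phi, \psi \in \hil$ with $\langle (1-w\overline{\zeta})^{-1}\phi, \psi \rangle_{L^2(|\nu|)} = \int_{\mt}(1-w\overline{\zeta})^{-1}\nu(d\zeta)$. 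Concretely $\phi = \eta$ and $\psi \equiv 1$ work, and then $\|\phi\|_\hil \|\psi\|_\hil = |\nu|(\mt)^{1/2} \cdot |\nu|(\mt)^{1/2} = \|\nu\|$. With this choice $h$ has exactly the form $1 + w\langle (I-wA)^{-1}\phi, \psi\rangle$ required by Theorem \ref{thm:1}.

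Applying Theorem \ref{thm:1} then yields
\[
  \sum_{z \in \mathcal{Z}(h)} \left( |z|^{-1} - 1 \right) \leq \|\phi\|_\hil \|\psi\|_\hil = \|\nu\| = \|Bh\|_{\mathcal{K}},
\]
which is exactly \eqref{eq:1}. The honest approach is of course to simply cite the equivalence of Theorem \ref{thm:4} directly: it asserts that every $h \in \mathcal{K}$ with $h(0) \neq 0$ has a representation $h(w)/h(0) = 1 + w\langle(I-wA)^{-1}\phi,\psi\rangle$ with $A$ a contraction, and — reading the statement closely — the construction there is set up so that $\|\phi\|_\hil\|\psi\|_\hil$ can be taken to be $\|Bh\|_{\mathcal{K}}$ (via $\|Bh\|_{\mathcal{K}} = \|\nu\|$). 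So in the cleanest write-up, Theorem \ref{thm:2} is immediate from Theorem \ref{thm:4} plus Theorem \ref{thm:1}, with Lemma \ref{lem:1} ensuring $Bh \in \mathcal{K}$ so that $\|Bh\|_{\mathcal{K}}$ is finite in the first place.

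The main obstacle — the only real content beyond bookkeeping — is verifying that the norms match up, i.e. that the factorization of the Cauchy-kernel integral against $\nu$ into an inner product $\langle(I-wA)^{-1}\phi,\psi\rangle$ can be arranged with $\|\phi\|_\hil\|\psi\|_\hil$ equal to (not just bounded by a multiple of) the total variation $\|\nu\| = \|Bh\|_{\mathcal{K}}$. The polar-decomposition choice above does this on the nose because $\int f \, d\nu = \int f \eta \, d|\nu| = \langle f\eta, 1\rangle_{L^2(|\nu|)}$ and $\|\eta\|_{L^2(|\nu|)}\|1\|_{L^2(|\nu|)} = |\nu|(\mt)$. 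One should also double-check the elementary point that applying $B$ does not lose zeros: $h(0)=1\neq 0$, so $0 \notin \mathcal{Z}(h)$, and on $\md \setminus \{0\}$ the functions $h-1$ and $w \cdot Bh$ agree, so the zero set and multiplicities relevant to the sum are unaffected — indeed the sum in \eqref{eq:1} is over $\mathcal{Z}(h)$, and the representation above reproduces $h$ itself, so there is nothing to reconcile.
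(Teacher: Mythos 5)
Your proposal is correct and follows essentially the same route as the paper: take the norm-attaining measure representing $Bh$, realize $h$ as $1+w\langle (I-wA)^{-1}\phi,\psi\rangle$ on $L^2(\mt,d|\nu|)$ with $A$ multiplication by $\overline{\zeta}$ and the polar density distributed between $\phi$ and $\psi$ so that $\|\phi\|\,\|\psi\|=\|\nu\|$, then apply Theorem \ref{thm:1}. The only cosmetic difference is that you place the unimodular density on $\phi$ rather than on $\psi$; also note that in the paper's logical order you cannot instead ``just cite Theorem \ref{thm:4},'' since its implication $(i)\Rightarrow(iii)$ is itself proved by this very construction.
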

\begin{rem}
  The previous inequality is sharp in the following sense: there exist $h \in \mathcal{K}$ with $h(0)=1$ and $\mathcal{Z}(h) \neq \emptyset$ such that the left- and right-hand sides of (\ref{eq:1}) coincide. For instance, this is the case for $h(w)=1+ \frac{w}{1+w}$ since here $Bh=K\delta_{-1}$, where $\delta_{-1}$ denotes the Dirac measure supported on $\{-1\}$, and so both sides of (\ref{eq:1}) are equal to $1$.
\end{rem}
\begin{proof}[Proof of Theorem \ref{thm:2}]
Let $\mu\in R_{Bh}$ be the unique measure with $\|Bh\|_{\mathcal{K}}=\|\mu\|$. In particular, $h(w)=1+w \cdot (K\mu)(w)$. Noting that $d\mu = \nu d|\mu|$ for some measurable function $\nu : \mt \to \mt$, we choose $\hil= L^2(\mt,d|\mu|)$, $\phi(\zeta)=1$, $\psi(\zeta)= \overline{\nu(\zeta)}$ and we define a unitary operator $A$ on $\hil$ by setting $(Af)(\zeta) = \overline{\zeta}f(\zeta)$. Then
\begin{eqnarray*}
  1+ w \langle (I-wA)^{-1} \phi, \psi \rangle  = 1 + \int_{\mt} \frac {w} {1-w\overline{\zeta}}  \mu(d\zeta) = h(w).
\end{eqnarray*}
Since $\|\phi\|_\hil \| \psi\|_\hil = \|\mu\|=\|Bh\|_{\mathcal{K}}$, Theorem \ref{thm:1} implies that
\[   \sum_{z \in \mathcal{Z}(h)} \left( |z|^{-1}-1 \right) \leq \|Bh\|_{\mathcal{K}}. \]
\end{proof}
In view of Lemma \ref{lem:2} we have:

\begin{cor}\label{cor:2}
  Let $h \in \mathcal{K}$ with $h(0)=1$. Then
\[
    \sum_{z \in \mathcal{Z}(h)} \left( |z|^{-1}-1 \right) \leq \|h\|_{\mathcal{K}}.
\]
\end{cor}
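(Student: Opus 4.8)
The plan is to deduce Corollary~\ref{cor:2} from Theorem~\ref{thm:2} by controlling the right-hand side of (\ref{eq:1}) via the backward shift operator. Given $h \in \mathcal{K}$ with $h(0)=1$, Theorem~\ref{thm:2} already gives
\[
  \sum_{z \in \mathcal{Z}(h)} \left( |z|^{-1}-1 \right) \leq \|Bh\|_{\mathcal{K}},
\]
so it suffices to show $\|Bh\|_{\mathcal{K}} \leq \|h\|_{\mathcal{K}}$.

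For this I would invoke Lemma~\ref{lem:2}, which states that the backward shift $B$ is a bounded operator on $(\mathcal{K},\|.\|_{\mathcal{K}})$ with operator norm exactly $1$. In particular, $\|Bh\|_{\mathcal{K}} \leq \|B\| \cdot \|h\|_{\mathcal{K}} = \|h\|_{\mathcal{K}}$ for every $h \in \mathcal{K}$. Combining this with the previous display yields
\[
  \sum_{z \in \mathcal{Z}(h)} \left( |z|^{-1}-1 \right) \leq \|h\|_{\mathcal{K}},
\]
which is the assertion of the corollary. One should note that Lemma~\ref{lem:1} guarantees $Bh \in \mathcal{K}$ in the first place, so that $\|Bh\|_{\mathcal{K}}$ is well defined and Theorem~\ref{thm:2} applies.

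There is essentially no obstacle here: the corollary is a one-line consequence of Theorem~\ref{thm:2} together with the norm bound $\|B\|=1$ from Lemma~\ref{lem:2}. The only substantive content has already been offloaded --- the contractivity of the backward shift on the space of Cauchy transforms (Lemma~\ref{lem:2}, proved in \cite{b_cima06}) and the operator-theoretic inequality underlying Theorem~\ref{thm:2}. If anything required care it would be checking that the hypotheses of Theorem~\ref{thm:2} transfer verbatim (namely $h \in \mathcal{K}$ and $h(0)=1$), which they do since the corollary assumes exactly these.
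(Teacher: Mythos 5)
Your proposal is correct and is exactly the paper's argument: the corollary follows from Theorem~\ref{thm:2} combined with the bound $\|Bh\|_{\mathcal{K}} \leq \|B\|\,\|h\|_{\mathcal{K}} = \|h\|_{\mathcal{K}}$ from Lemma~\ref{lem:2}. The paper states this in one line (``In view of Lemma~\ref{lem:2} we have:''), and your write-up just makes the same deduction explicit.
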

\begin{rem}
It can well happen that $\|Bh\|_{\mathcal{K}} < \|h\|_{\mathcal{K}}$. For instance, if $h \equiv 1$ then $\|Bh\|_{\mathcal{K}}=0$, while $\|h\|_{\mathcal{K}}=1$.
\end{rem}

%


In the proof of Theorem \ref{thm:2} we have first shown that every $h \in \mathcal{K}$ with $h(0)=1$ can be represented as
$h(w)=1+w\langle (I-wA)^{-1} \phi, \psi \rangle$ for special choices of $\phi$ and $\psi$ and some special unitary operator $A$, and then we have applied Theorem \ref{thm:1} to estimate the corresponding Blaschke sum from above by $\|\phi\|_\hil\|\psi\|_\hil$. However, this choice of $\phi,\psi$ and $A$ is certainly not unique and so there exists the possibility that a different choice (e.g. with a non-unitary $A$) might lead to a better bound on the sum in (\ref{eq:1}), or to a larger class of holomorphic functions which might be treatable. The next theorem will show that this is not the case.

\begin{thm}\label{thm:4}
Let $h \in H(\md)$ with $h(0)=1$. Then the following are equivalent:
  \begin{enumerate}
  \item[(i)] $h \in \mathcal{K}.$
  \item[(ii)] There exists a complex separable Hilbert space $(\hil,\langle . , . \rangle_\hil)$, vectors $\phi, \psi \in \hil$ and a contraction $A$ on $\hil$ such that
    \begin{equation}
      \label{eq:4}
    h(w)= 1 + w\langle (I-wA)^{-1} \phi, \psi \rangle_\hil, \qquad w \in \md.
    \end{equation}
\item[(iii)] There exists a complex separable Hilbert space $(\hil', \langle . , . \rangle_{\hil'})$, vectors $\phi', \psi' \in \hil'$ and a unitary operator $U$ on $\hil'$  such that
    \begin{equation}
      \label{eq:5}
    h(w)= 1 + w\langle (I-wU)^{-1} \phi', \psi' \rangle_{\hil'}, \qquad w \in \md.
    \end{equation}
\end{enumerate}
Moreover, if $h \in \mathcal{K}$ then
\[ \|Bh\|_{\mathcal{K}} = \min \|\phi\|_\hil \|\psi\|_\hil= \min \|\phi'\|_{\hil'}\|\psi'\|_{\hil'}\]
where the minima are taken with respect to all possible representations of $h$ in the form (\ref{eq:4}) and (\ref{eq:5}), respectively.
\end{thm}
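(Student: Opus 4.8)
The plan is to prove Theorem~\ref{thm:4} by establishing the chain of implications $(iii) \Rightarrow (ii) \Rightarrow (i) \Rightarrow (iii)$, and then to extract the norm identity from a careful bookkeeping of the constructions used. The implication $(iii) \Rightarrow (ii)$ is trivial, since a unitary operator is a contraction. For $(ii) \Rightarrow (i)$, suppose $h$ has a representation as in \eqref{eq:4} with a contraction $A$. I would expand the resolvent as a Neumann series, $w\langle(I-wA)^{-1}\phi,\psi\rangle = \sum_{n\geq 1} w^n \langle A^{n-1}\phi,\psi\rangle$, so that $Bh$ has Taylor coefficients $c_n = \langle A^n\phi,\psi\rangle$ for $n\geq 0$. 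The task is to produce a measure $\mu$ on $\mt$ whose Cauchy transform has these coefficients, i.e. $\int_\mt \overline{\zeta}^n\,\mu(d\zeta) = c_n$ for all $n\geq 0$. Here I would invoke the unitary dilation of the contraction $A$ (Sz.-Nagy dilation): there is a Hilbert space $\hil' \supseteq \hil$ and a unitary $U$ on $\hil'$ with $A^n = P_\hil U^n|_\hil$ for $n\geq 0$, where $P_\hil$ is the orthogonal projection onto $\hil$. Setting $\phi' = \phi$ and $\psi' = \psi$ (viewed in $\hil'$), we get $\langle U^n\phi',\psi'\rangle = \langle A^n\phi,\psi\rangle = c_n$ for $n\geq 0$; this already gives $(ii) \Rightarrow (iii)$ directly, and combined with the spectral theorem for $U$ it gives $(ii) \Rightarrow (i)$: by the spectral theorem there is a projection-valued measure $E$ on $\mt$ with $U = \int_\mt \overline{\zeta}\, E(d\zeta)$ (using the convention matching $(Af)(\zeta)=\overline\zeta f(\zeta)$ in the proof of Theorem~\ref{thm:2}), so $\mu(d\omega) := \langle E(d\omega)\phi',\psi'\rangle$ is a finite complex Borel measure on $\mt$ with $\int_\mt\overline\zeta^{\,n}\mu(d\zeta)=c_n$, whence $K\mu = Bh$ and therefore $h \in \mathcal{K}$ by Lemma~\ref{lem:1}. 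Finally, $(i) \Rightarrow (iii)$ is exactly the construction already carried out in the proof of Theorem~\ref{thm:2}: given $h = K\mu$ with $h(0)=1$, take $\hil' = L^2(\mt, d|\mu|)$, $\phi'=1$, $\psi'=\overline{\nu}$ where $\mu = \nu\,d|\mu|$, and $U$ the unitary multiplication by $\overline{\zeta}$.

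For the norm identity, I would argue with two inequalities. First, the proof of Theorem~\ref{thm:2} shows that the particular representation \eqref{eq:5} built from the minimal-total-variation measure $\mu_0 \in R_{Bh}$ satisfies $\|\phi'\|_{\hil'}\|\psi'\|_{\hil'} = \|\mu_0\| = \|Bh\|_{\mathcal{K}}$; this realizes the value $\|Bh\|_{\mathcal{K}}$ as one of the competitors in both minima, so $\min\|\phi'\|\|\psi'\| \leq \|Bh\|_{\mathcal{K}}$ and $\min\|\phi\|\|\psi\| \leq \|Bh\|_{\mathcal{K}}$. For the reverse, suppose $h$ is represented as in \eqref{eq:4} by some contraction $A$ and vectors $\phi,\psi$. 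Passing to the unitary dilation as above, we may assume without loss of generality that $A = U$ is unitary and the vectors are unchanged in norm, so it suffices to treat the unitary case. Given a unitary representation \eqref{eq:5}, the spectral theorem produces $\mu(d\omega) = \langle E(d\omega)\phi',\psi'\rangle \in R_{Bh}$ with total variation $\|\mu\| \leq \|\phi'\|_{\hil'}\|\psi'\|_{\hil'}$ (the total variation of a complex measure of the form $\langle E(\cdot)\phi',\psi'\rangle$ is at most $\|\phi'\|\,\|\psi'\|$, by a standard Cauchy--Schwarz estimate on Hahn decompositions or on arbitrary finite partitions). Hence $\|Bh\|_{\mathcal{K}} = \min_{\mu \in R_{Bh}}\|\mu\| \leq \|\mu\| \leq \|\phi'\|_{\hil'}\|\psi'\|_{\hil'}$, and taking the infimum over representations gives $\|Bh\|_{\mathcal{K}} \leq \min\|\phi'\|\|\psi'\|$ and likewise $\|Bh\|_{\mathcal{K}} \leq \min\|\phi\|\|\psi\|$. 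Combining the two directions and noting the minima are attained (the Theorem~\ref{thm:2} construction attains them) yields the stated equalities.

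The main obstacle, I expect, is the inequality $\|\mu\| \leq \|\phi'\|\,\|\psi'\|$ for the spectral measure $\mu(\cdot) = \langle E(\cdot)\phi',\psi'\rangle$ together with the careful handling of the dilation step in the norm comparison. The total-variation bound itself is a clean computation: for any finite measurable partition $\{\omega_k\}$ of $\mt$, $\sum_k |\langle E(\omega_k)\phi',\psi'\rangle| \leq \sum_k \|E(\omega_k)\phi'\|\,\|E(\omega_k)\psi'\| \leq \left(\sum_k\|E(\omega_k)\phi'\|^2\right)^{1/2}\left(\sum_k\|E(\omega_k)\psi'\|^2\right)^{1/2} = \|\phi'\|\,\|\psi'\|$, using orthogonality of the ranges $E(\omega_k)$; taking the supremum over partitions gives the claim. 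The subtler point is that in the dilation step one must check that the vectors $\phi,\psi \in \hil \subseteq \hil'$ keep the same norms (they do, since $\hil$ is isometrically embedded) and that only the coefficients $c_n$ for $n\geq 0$ enter, which is precisely what the Sz.-Nagy dilation delivers (the negative powers $U^{-n}$ for the unitary dilation are irrelevant because $Bh$ is holomorphic). Once these two ingredients are in place, the remaining arguments are the routine resolvent-expansion and spectral-theorem manipulations sketched above. One should also remark that separability of $\hil'$ is preserved: the Sz.-Nagy dilation of an operator on a separable space can be taken on a separable space, and the spectral measure lives on the separable space $\mt$.
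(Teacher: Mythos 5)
Your proposal is correct and follows essentially the same route as the paper: Sz.-Nagy dilation plus the Neumann series for $(ii)\Rightarrow(iii)\Rightarrow(i)$, the $L^2(\mt,d|\mu_0|)$ construction from Theorem~\ref{thm:2} for $(i)\Rightarrow(iii)$ and for attaining the minimum, and the spectral-measure total-variation bound $\|\mu\|\leq\|\phi'\|\,\|\psi'\|$ for the reverse inequality. The only difference is that you spell out the Cauchy--Schwarz/partition argument for that bound, which the paper dismisses as ``easy to see'' in Remark~\ref{rem:3}, and you absorb the paper's conjugate-measure step $\overline{\mu}$ into your convention for the spectral measure; both are harmless.
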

\begin{proof}[Proof of Theorem \ref{thm:4}]
The implication $(i) \Rightarrow (iii)$ has already been shown in the proof of Theorem \ref{thm:2}. In the same proof we have also seen that there exists a representation of $h \in \mathcal{K}$ in the form (\ref{eq:5}) such that $\|\phi'\|_{\hil'} \|\psi'\|_{\hil'} = \|Bh\|_{\mathcal{K}}$. The implication $(iii) \Rightarrow (ii)$ is trivial, as is the fact that
\[  \inf   \|\phi\|_\hil \|\psi\|_\hil \leq \inf \|\phi'\|_{\hil'}\|\psi'\|_{\hil'} .\]
Hence, what remains to be shown is the implication $(ii) \Rightarrow (i)$ and that for an arbitrary $h$ of the form (\ref{eq:4}) one has \[\|Bh\|_{\mathcal{K}} \leq \|\phi\|_{\hil} \|\psi\|_{\hil}.\]

To this end, suppose that $h(w)=1 + w\langle (I-wA)^{-1} \phi, \psi \rangle_{\hil}$ where $\phi,\psi \in \hil$ and $A$ is a contraction on $\hil$. By the Sz.-Nagy dilation theorem (see e.g. \cite{b_Davies}, Theorem 10.3.1) there exists a Hilbert space $(\hil',\langle . , . \rangle_{\hil'})$ with $\hil \subset \hil'$ (and $\langle f,g \rangle_{\hil'}=\langle f, g \rangle_{\hil}$ for every $f,g \in \hil$) and a unitary operator $U$ on $\hil'$ such that for every $n \in \mn_0$ and $f,g \in \hil$
\[  \langle U^n f, g \rangle_{\hil'} = \langle A^n f, g \rangle_{\hil}.\]
In particular, using the Neumann series we see that
\begin{eqnarray}
  h(w) &=&  1 + \sum_{n \geq 1} w^n \langle A^{n-1} \phi, \psi \rangle_{\hil}
=  1 + \sum_{n \geq 1} w^n \langle U^{n-1} \phi, \psi \rangle_{\hil'} \nonumber\\
&=& 1 + w\langle (I-wU)^{-1} \phi, \psi \rangle_{\hil'}. \label{eq:8}
\end{eqnarray}
Now let $E_U(.)$ denote the projection-valued spectral measure, defined on the Borel sets of $\mc$, associated to $U$ via the spectral theorem. Note that $E_U$ is supported on $\mt$. Let us define a complex Borel measure $\mu \in \mathcal{M}$ by setting  $\mu(.)= \langle E_U(.) \phi, \psi \rangle_{\hil'}$.
\begin{rem}\label{rem:3}
   It is easy to see that $\|\mu\| \leq \|\phi\|_{\hil'}\|\psi\|_{\hil'}= \|\phi\|_\hil \|\psi\|_\hil$.
\end{rem}
The spectral theorem and identity (\ref{eq:8}) imply that we can rewrite $h$ as
\[ h(w)= 1 + w\int_{{\mt}} \frac{\mu(d\zeta)}{1-w\zeta}.\]

Hence, setting $\overline{\mu}(\Omega):=\mu(\Omega^*)$ where $\Omega^*=\{ \lambda : \overline{\lambda}\in \Omega \}$, we see that $Bh=K \overline{\mu}$. This shows that $Bh \in \mathcal{K}$ and so  $h \in \mathcal{K}$ by Lemma \ref{lem:1}. Moreover, Remark \ref{rem:3} implies that
\[\|Bh\|_{\mathcal{K}} \leq \|\overline{\mu}\|= \|\mu\| \leq \|\phi\|_\hil \| \psi\|_\hil.\]
This concludes the proof of Theorem \ref{thm:4}.
\end{proof}

\begin{rem}
While in this paper we have concentrated on Cauchy transforms of measures on the circle, let us remark that we can obtain analogous results for Cauchy transforms of measures on the line (we thank the anonymous referee for making us aware of this fact). Namely, if
$$ h(\lambda)= \int_{\mr} \frac{\mu(ds)}{s-\lambda}, \qquad \im(\lambda) > 0,$$
denotes the Cauchy (or Borel) transform of a finite complex Borel measure $\mu$ on $\mr$ (which we assume to be normalized, $\mu(\mr)=1$, and to satisfy
$ \| s \mu(ds) \| := \int_{\mr} |s| d|\mu|(s) < \infty$), then we can show, using the same method as above, that
$$ \sum_{\lambda \in \mathcal{Z}(h)} \im(\lambda) \leq \|s \mu(ds)\|.$$
This follows from the fact that this class of functions coincides with the class of functions of the form
$ h(\lambda)= \langle (A-\lambda)^{-1} \phi, \psi \rangle,$ where $A$ is a selfadjoint operator on a Hilbert space $\hil$ and $\phi,\psi \in \hil$, and by an application of Theorem \ref{thm:3} to the operator $L=A+M$, where $Mf=-\langle f,\psi\rangle \phi$. The equality of these two classes of functions is a consequence of the spectral theorem for selfadjoint operators or, ultimately, of the Herglotz representation theorem.
\end{rem}

\section{Appendix}

\begin{proof}[Proof of Lemma \ref{lem:1}]
Let $h=K\mu$ for some $\mu \in \mathcal{M}$. Then
\begin{eqnarray*}
  (BK\mu)(w) = \frac 1 w \left( \int_{\mt} \frac{\mu(d\zeta)}{1-w\overline{\zeta}} - \int_{\mt} \mu(d\zeta) \right)
= \int_{\mt} \frac{\overline{\zeta}\mu(d\zeta) }{1-w\overline{\zeta}}\:,
\end{eqnarray*}
so $Bh =K(\overline{\zeta}\mu(d\zeta)) \in \mathcal{K}$. To show the converse let $Bh=K\sigma$ for some $\sigma \in \mathcal{M}$.
Define $\mu \in \mathcal{M}$ by
\[ \mu(d\zeta)=\zeta \sigma(d\zeta) + (h(0)-c_\sigma) m(d\zeta),\]
where $c_\sigma = \int_{\mt} \zeta \sigma(d\zeta)$.
Then
\begin{eqnarray*}
 \int_{\mt} \frac {\mu(d\zeta)} {1-w\overline{\zeta}}
&=& \int_{\mt} \frac {\zeta} {1-w\overline{\zeta}} \sigma(d\zeta) + (h(0)-c_\sigma) \int_{\mt} \frac{m(d\zeta)}{1-w\overline{\zeta}}  \\
&=& h(0)+ \int_{\mt} \frac {\zeta} {1-w\overline{\zeta}} \sigma(d\zeta) -c_\sigma \\
&=& h(0)+ \int_{\mt} \frac {\zeta} {1-w\overline{\zeta}} \sigma(d\zeta) -\int_{\mt} \zeta \frac{1-w\overline{\zeta}}{1-w\overline{\zeta}} \sigma(d\zeta) \\
&=& h(0)+  w \int_{\mt} \frac {\sigma(d\zeta)} {1-w\overline{\zeta}} =h(w).
\end{eqnarray*}
This shows that $h \in \mathcal{K}$.
\end{proof}

\begin{proof}[Proof of Theorem \ref{thm:3}]
The trace norm of $L-A$ can be computed as follows:
\begin{equation}\label{eq:tr}
  \|L-A\|_{tr} = \sup_{\{e_n\},\{f_n\}} \left\{ \sum_{n}| \langle (L-A) e_n, f_n \rangle| \right\},
\end{equation}
where the supremum is taken with respect to arbitrary orthonormal sequences $\{e_n\}$ and $\{f_n\}$ in $\hil$, see \cite{b_Simon05} Proposition 2.6. Let $ \lambda_1, \lambda_2, \ldots $  denote an enumeration of $\sigma_d(L)$, where each eigenvalue is counted according to its algebraic multiplicity and equal eigenvalues are denoted successively. By Schur's lemma, see e.g. \cite{b_Gohberg69} Remark 4.1 on page 17, there exists an orthonormal sequence $\{g_n\}$ in $\hil$ and a sequence of complex numbers $\{b_{jk}\}$ such that
\begin{equation}
\label{eq:2}
  Lg_n=b_{n1}g_1 + b_{n2}g_2 + \ldots + b_{nn}g_n \quad \mbox{and} \quad   b_{nn}=\lambda_n.
\end{equation}
Hence, we can use (\ref{eq:tr}) to obtain
\[
    \|L-A\|_{tr} \geq \sum_{n} |\langle (L-A) g_n,g_n \rangle| = \sum_{n} |\langle Lg_n,g_n \rangle - \langle Ag_n,g_n \rangle|.
\]
Using (\ref{eq:2})  we have $\langle Lg_n, g_n \rangle = \lambda_n$ and so
\begin{eqnarray*}
    \|L-A\|_{tr} \geq \sum_{n} |\lambda_n - \langle Ag_n,g_n \rangle| \geq \sum_{n} \dist(\lambda_n,\num(A)).
\end{eqnarray*}
\end{proof} 

\bibliography{bibliography}
\bibliographystyle{plain}

\end{document}